\newcommand{\CC}{{\mathbb C}}
\def\bege{\begin{equation}} \def\ende{\end{equation}}
\def\begr{\begin{eqnarray}} \def\endr{\end{eqnarray}}
\def\CC{ \mathbb{C}}
\def\D{\mathbb{D}}
\def\N{\mathbb N}
\def\hD{\hat{\mathcal{D}}}
\def\dD{\mathcal{D}}
\def\vp{\varphi}
\def\om{\omega}
\def\p{{\prime}}
\def\up{\upsilon}
\def\lmd{\lambda}
\def\begr{\begin{eqnarray}} \def\endr{\end{eqnarray}}
\def\ol{\overline}
\newtheorem{Lemma}{Lemma}%[section]
\newtheorem{Theorem}[Lemma]{Theorem}
\newcounter{other}            % Questions get letters
\newtheorem{otherth}[other]{Theorem}              % Other papers' theorems
\begin{document}
\title[]{Sums of two generalized weighted  composition operators
 between Bergman spaces}

\author{ Juntao Du   and Zuoling Liu$\dagger$}

\address{Juntao Du\\ Department of mathematics, Guangdong University of Petrochemical Technology, Maoming, Guangdong, 525000,  P. R. China.}
 \email{jtdu007@163.com  }

%\address{Songxiao Li\\ Department of mathematics, Shantou University, Shantou, Guangdong, 515063,  P. R. China.}
%\email{jyulsx@163.com}
%
\address{Zuoling Liu \\  School of Mathematics, Jiaying University, Meizhou, Guangdong, 514015,  P. R. China }
\email{zlliustu@163.com}

 \subjclass[2010]{30H20, 47B33,47B10, 47B38,}
 \begin{abstract}
 Based on  a  judicious partitioning of   the preimage of the pseudohyperbolic disk  under the composition
symbols, in this article,  we show that the boundedness and compactness of the sum of two generalized weighted composition operators with  different  symbols and orders between Bergman spaces are properly determined by each of  individual summands.
 \thanks{$\dagger$ Corresponding author.}
 \thanks{The work was supported by %NNSF of China (Nos. 12371131 and 12271328),  the Guangdong Basic and Applied Basic Research Foundation (No. 2022A1515012117), 
 the Talent Recruitment Projects of GDUPT(No. 2022rcyj2008), the Science and Technology Project of Maoming City(No. 2023417), 
 %the Scientific Research Initiation Grant of  STU(No. NTF23004), 
 the Talent Recruitment Project of Jiaying University(Grant No.131/324E1512 ) and  the Internal Research Project of Jiaying University (Grant No.131/325E0303).}
 \vskip 3mm \noindent{\it Keywords}:  Generalized weighted composition operator;  Bergman space; Doubling weight.
\end{abstract}
 \maketitle

\section{Introduction}
Let $H(\D)$ denote  the space of all analytic functions in  the open unit disc $\D=\{z\in\CC:|z|<1\}$. For a nonnegative function $\omega \in L^1([0,1])$, its extension to $\mathbb{D}$, defined by $\omega(z) = \omega(|z|)$ for all $z \in \mathbb{D}$, is called a radial weight. The set of doubling weights, denoted by $\hat{\mathcal{D}}$,  consists of all radial weights $\omega$ such that (see \cite{Pj2015})
 $$\hat{\omega}(r)\leq C \hat{\omega}\left(\frac{r+1}{2}\right),\,\,\,0\leq r<1$$
for some  constant $C=C(\omega)\geq1$. Here  $\hat{\om}(z)=\int_{|z|}^1\om(t)dt $.
Futhermore, if $\omega\in\hat{\mathcal{D}}$ and satisfies
\begin{align*}%\label{0405-1}
\hat{\omega}(r)\geq C\hat{\omega}\left(1-\frac{1-r}{K}\right),\,\,\,0\leq r<1
\end{align*}
for some other constants $K=K(\omega)>1$ and $C=C(\omega)>1$. We refer to $\omega$ as a two-sides doubling weight and denote it by $\omega\in\dD$. For any $\lambda\in\D$,  the  Carleson square at $\lambda\in\D$ is defined by
$$S(\lambda)=\left\{re^{\mathrm{i\theta}}:|\lambda|\leq r<1, |\mbox{Arg } \lambda-\theta|<\frac{1-|\lambda|}{2}\right\}.$$
For a radial weight $\om$, let $\om(S(\lambda))=\int_{S(\lambda)}\om(z)dA(z)$.
Here and henceforth, $dA$ denotes the normalized Lebesgue area measure on $\mathbb{D}$.
Obviously, $\om(S(\lambda))\approx (1-|\lambda|)\hat{\om}(\lambda)$.
Further properties of doubling weights can be found in \cite{Pj2015, PjRj2021adv, PjRjSk2021jga} and   references therein.
%See  \cite{Pj2015,  PjRj2021adv, PjRjSk2021jga} and references therein for more properties of doubling weights.

For $0<p<\infty$ and a given $\omega\in\hat{\mathcal{D}}$, the Bergman space $A_\omega^p$  consists of all  functions $f\in H(\D)$ such that
$$\|f\|_{A_\omega^p}^p=\int_\mathbb{D} |f(z)|^p\omega(z)dA(z)<\infty.$$
 As usual, we write  $A_\alpha^p$ for the classical weighted Bergman space induced by the standard radial weight $\omega(z)=(\alpha+1)(1 - |z|^2)^\alpha$ with $-1<\alpha<\infty$. Throughout this paper, we assume that  $\hat{\om}(z) >0$ for all $z\in\D$. Otherwise $A_\om^p=H(\D)$.
In \cite{PjRj2021adv}, the authors characterized the Littlewood-Paley formula  on  Bergman spaces induced  by radial weights.
That is, when $\om$ is a radial weight, $0<p<\infty$ and $k\in\N$,
\begin{align}\label{0402-1}
\int_\D |f(z)|^p\om(z)dA(z)\approx \sum_{j=0}^{k-1}|f^{(j)}(0)|^p+\int_\D |f^{(k)}(z)|^p(1-|z|^2)^{kp}\om(z)dA(z)
\end{align}
holds for all $f\in H(\D)$ if and only if $\om\in\dD$.

Let  $S(\D)$ be the class of all analytic self-maps of $\D$. For $n\in\N\cup\{0\},\vp\in S(\D)$, and $u\in H(\D)$, the generalized weighted composition operator $uD_\vp^{(n)}$ is defined by
$$uD_\vp^{(n)} f=u \left(f^{(n)}\circ \vp\right), \quad f\in H(\D). $$
The operator $uD_\vp^{(n)}$, which is also called a weighted differentiation composition operator,  was introduced by Zhu in \cite{ZXL2007}.
As special cases, the generalized weighted composition operators $uD_\vp^{(n)}$ include multiplication operators $M_u$ ($n=0$ and $\vp(z)=z$),   composition operator $C_\vp$ ($n=0$ and $u(z)\equiv 1$) and weighted composition operators $uC_\vp$ ($n=0$).
Much effort has been expended on characterizing those symbols which induce bounded (compact) composition and weighted composition operators.
Readers interested in this topic can refer to \cite{CM1995, CzZr2004jlms,CzZr2007ijm,Sj1993,SjSc1990pjm,Zhu1}.
%Some of the history of weighted composition operators and composition operators is described in  \cite{CM1995,EK2023, GMR2023,SU2014, Zhu1} and the references therein.
Moreover, when $u\equiv 1$, the operator $uD_\vp^{(n)}$ reduces to the differentiation-composition operator $D_\vp^{(n)}$; when $u\equiv 1$ and $\vp(z)=z$, $uD_\vp^{(n)}$ it simplifies to the $n$-th differentiation operator $D^{(n)}$.  So,  the generalized weighted composition operator   attracted a lot of attentions since it covers a lot of classical operators.
%See \cite{ ZXL2007, zxl2, zxl5, ZXL2019} for further information and results on generalized weighted composition operators on analytic function spaces.

The products of $C_\vp$, $M_u$ and $D^{(n)}$ can be obtained in six ways, such as $M_uC_\vp D$, $M_uD C_\vp$, $C_\vp M_u D$, $DM_u C_\vp$, $C_\vp D M_u$ and $D C_\vp M_u$.
In order to treated those operators in a unified manner, Stevi\'c, Sharma and Bhat \cite{SsSaBa2011amc,SsSaBa2011amc-2} introduced an operator
 $$T_{u_0,u_1,\vp}=u_0D_\vp^{(0)}+u_1 D_\vp^{(1)}$$
 and characterized the boundedness, compactness and essential norm of $T_{u_0,u_1,\vp}:A_\alpha^p\to A_\alpha^p$ with some assumptions.
 %\begin{align}\label{0914-1}
% u_0\in H^\infty\,\,\,\mbox{or}\,\,\,\sup_{z\in\D}\frac{|u_1(z)|}{1-|\vp(z)|^2}<\infty.
% \end{align}
%
In 2020, Wang, Wang and Guo \cite{WWG2020B} introduced an natural extension of $T_{u_0,u_1,\vp}$, denoted by
$$T_{n,\vp,\vec{u}}=\sum_{k=0}^n u_kD_\vp^{(k)},$$
in which, $\vec{u}=\{u_0,u_1,...u_n\}\subset H(\D),\vp\in S(\D),$
and characterized the boundedness and compactness of the operator $T_{n,\vp,\vec{u}}$
 mapping from  admissible spaces $X$, which includes Bergman spaces, Hardy spaces and so on,  into the $k$-th weighted type spaces $\mathcal{W}_\mu^{(k)}$ and its little-oh subspces $\mathcal{W}_{\mu,0}^{(k)}$, which include the weighted type spaces, weighted Bloch spaces, weighted Zygmund spaces and their little-oh subspaces but not the Bergman spaces and Hardy spaces.
See, for example,   \cite{FG2022,WWG2020,YyLy2015caot,ZfLy2018caot} for more investigations about these operators.
In 2024, %by using Khinchin's inequality, Ger$\check{\mbox{s}}$gorin's theorem  and the atomic decomposition of weighted Bergman spaces,
 the current authors  \cite{DjLsLz2024mmas}  showed that, if $1<p,q<\infty$ and $\om,\up\in\dD$,
$$\|T_{n,\vp,\vec{u}}\|_{A_\om^p\to A_\up^q}\approx \sum_{k=0}^n \|u_kD_\vp^{(k)}\|_{A_\om^p\to A_\up^q},
\quad
\|T_{n,\vp,\vec{u}}\|_{e,A_\om^p\to A_\up^q}\approx \sum_{k=0}^n \|u_kD_\vp^{(k)}\|_{e,A_\om^p\to A_\up^q}.$$
Here, $\|\cdot\|_{e,A_\om^p\to A_\up^q}$ is the essential norm of an operator from $A_\om^p$ to $A_\upsilon^q$.
More precisely,   the essential norm of a bounded operator $T$ from a Banach space $X$ to another Banach space $Y$   is defined as
$$\|T\|_{e,X\to Y}=\inf\Big\{\|T-K\|_{X\to Y};K:X\to Y \mbox{ is compact}\Big\}.$$
Obviously, $T$ is compact if and only if $\|T\|_{e,X\to Y}=0$.

In \cite{AsFt2019caot}, Acharyya and Ferguson introduced the sum of generalized weighted  composition operators $T_{n,\vec{\vp},\vec{u}}$ and generalized the operator  $T_{n,\vp,\vec{u}}$, which is defined by
 $$T_{n,\vec{\vp},\vec{u}}=\sum_{k=0}^n u_k D_{\vp_k}^{(n)},\qquad \vec{u}:=\{u_k\}_{k=0}^n\subset H(\D),\,\vec{\vp}:=\{\vp_k\}_{k=0}^n\subset S(\D),$$
 and  investigated the order-boundedness of $T_{n,\vec{\vp},\vec{u}}:A_\alpha^p\to A_\beta^q$ and the compactness of $T_{n,\vec{\vp},\vec{u}}:A_\alpha^p\to H^\infty$.
 In \cite{DjLsLz2024mmas}, we estimated the norm and essential of $T_{n,\vec{\vp},\vec{u}}:A_\om^p\to H^\infty$.
Existing results demonstrate that the properties of $T_{n,\vec{\vp},\vec{u}}$ are entirely determined by each of its individual summands.
This naturally leads to the following question:

 {\bf Q1}: what can be said about the operator $T_{n,\vec{\vp},\vec{u}}:A_\om^p\to A_\upsilon^q$?

In 2021, Choe et al. \cite{CCKY2020jfa}  established complete characterizations in terms of Carleson measures for bounded and compact differences of weighted composition operators $uC_\vp-v C_\psi$ acting on the standard weighted Bergman spaces  over the unit disk.
A principal innovation resides in the judicious partitioning of $\vp^{-1}(E_s(r))$, coupled with the strategic selection and rigorous estimation of $\left|u-v\frac{1-\ol{b}\vp}{1-\ol{b}\psi}\right|$ across each partition.
Here $E_s(a)$ is a pseudohyperbolic  disk   centered at $a\in\D$ with radius $r\in(0,1)$.
Their work yielded a significant breakthrough by resolving the open question about the compactness  for differences of  composition operators on Hardy spaces, originally posed by Shapiro and Sundberg in their seminal work \cite{SjSc1990pjm}.
Building upon this foundational methodology, subsequent research has systematically extended these results to diverse settings: to different standard weighted Bergman spaces on the unit disk \cite{CCKY2021ieot}, to multidimensional analogues in the unit ball setting \cite{CCKP2024caot}, and more recently, to weighted Bergman spaces induced by two-sides doubling weights \cite{Cj2023bkms}.

Motivated  by these researches, this short note presents an  affirmative answer to the Question Q1 when $T_{n,\vec{\vp},\vec{u}}$ only has two summands.
Unlike the reference \cite{DjLsLz2024mmas},   the proof of this conclusion is based on estimates of test functions on different subsets of the preimage of the pseudohyperbolic  disk $E_s(a)$ under the composition symbol $\vp$.

\begin{Theorem}\label{0421-1}
Suppose $1<p, q<\infty$,  $0\leq n<m<\infty$, $\om\in\dD$, $\up$ is a positive Borel measure on $\D$.
 Then, for all $\vp,\psi\in S(\D)$ and  $u_m,u_n\in H(\D)$,
$$\|u_nD_{\vp}^{(n)}+u_mD^{(m)}_{\psi}\|_{A_\om^p\to L_\up^q}\approx \|u_nD_{\vp}^{(n)}\|_{A_\om^p\to L_\up^q}+\|u_mD^{(m)}_{\psi}\|_{A_\om^p\to L_\up^q}.$$
Moreover, if $p\leq q$ and $u_nD_{\vp}^{(n)}+u_mD^{(m)}_{\psi}:A_\om^p\to L_\up^q$ is bounded,
$$\|u_nD_{\vp}^{(n)}+u_mD^{(m)}_{\psi}\|_{e,A_\om^p\to L_\up^q}
  \approx \|u_nD_{\vp}^{(n)}\|_{e,A_\om^p\to L_\up^q}+\|u_mD^{(m)}_{\psi}\|_{e,A_\om^p\to L_\up^q}.$$
\end{Theorem}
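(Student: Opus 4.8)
The plan is to reduce both equivalences to their nontrivial halves: the bounds $\|T_1+T_2\|\le\|T_1\|+\|T_2\|$ and $\|T_1+T_2\|_{e}\le\|T_1\|_{e}+\|T_2\|_{e}$ are immediate from the triangle inequality, where $T_1:=u_nD_\vp^{(n)}$ and $T_2:=u_mD_\psi^{(m)}$. It therefore suffices to prove $\|T_1\|\lesssim\|T_1+T_2\|$ and $\|T_2\|\lesssim\|T_1+T_2\|$ (all norms $A_\om^p\to L_\up^q$), and the analogues for the essential norms. First I would record the single-operator picture. By the Littlewood--Paley identity \eqref{0402-1}, the map $f\mapsto f^{(k)}$ carries $A_\om^p$ onto $A_{\om_k}^p$, where $\om_k(z)=(1-|z|^2)^{kp}\om(z)\in\dD$, with $\|f^{(k)}\|_{A_{\om_k}^p}\lesssim\|f\|_{A_\om^p}$ and an approximate right inverse obtained by $k$-fold integration. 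Since $\|T_1f\|_{L_\up^q}$ depends only on $f^{(n)}$ and $\|T_2f\|_{L_\up^q}$ only on $f^{(m)}$, it follows, with the pull-back measures $\mu_n(E)=\int_{\vp^{-1}(E)}|u_n|^q\,d\up$ and $\mu_m(E)=\int_{\psi^{-1}(E)}|u_m|^q\,d\up$, that $\|T_1\|$ and $\|T_2\|$ equal the norms of the Carleson embeddings $A_{\om_n}^p\hookrightarrow L_{\mu_n}^q$ and $A_{\om_m}^p\hookrightarrow L_{\mu_m}^q$. For $p\le q$ this gives $\|T_1\|^q\approx\sup_a \mu_n(E_s(a))/\om_n(S(a))^{q/p}$ and $\|T_2\|^q\approx\sup_a \mu_m(E_s(a))/\om_m(S(a))^{q/p}$, and for $p>q$ the corresponding Luecking-type integral. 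Thus everything reduces to dominating these Carleson quantities by $\|T_1+T_2\|^q$.

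Next I would feed a finite family of test functions into the sum. For $a\in\D$ and a parameter $\b$, put $f_{a,\b}(z)=(1-|a|^2)^{\b}\,\om(S(a))^{-1/p}\,(1-\ol a z)^{-\b}$, so that $\|f_{a,\b}\|_{A_\om^p}\approx1$ uniformly in $a$ for all large $\b$. Writing $P_k(\b)=\b(\b+1)\cdots(\b+k-1)$ and $\xi_1(z)=\tfrac{1-|a|^2}{1-\ol a\vp(z)}$, $\xi_2(z)=\tfrac{1-|a|^2}{1-\ol a\psi(z)}$, a direct differentiation yields
\[(T_1+T_2)f_{a,\b}(z)=C_1(z)\,P_n(\b)\,\xi_1(z)^{\b}+C_2(z)\,P_m(\b)\,\xi_2(z)^{\b},\]
with $C_1(z)=u_n(z)\,\ol a^{\,n}(1-\ol a\vp(z))^{-n}\om(S(a))^{-1/p}$ and $C_2(z)=u_m(z)\,\ol a^{\,m}(1-\ol a\psi(z))^{-m}\om(S(a))^{-1/p}$. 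Since $\om_m(S(a))\approx(1-|a|)^{mp}\om(S(a))$ and $|1-\ol a\psi(z)|\approx1-|a|$ on $\psi^{-1}(E_s(a))$, one gets $|C_2(z)|\approx|u_m(z)|\,\om_m(S(a))^{-1/p}$ there, whence $\int_{\psi^{-1}(E_s(a))}|C_2|^q\,d\up\approx\mu_m(E_s(a))/\om_m(S(a))^{q/p}$; symmetrically for $C_1$ and $\mu_n$ on $\vp^{-1}(E_s(a))$.

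The crux is to recover $|C_2|$ (resp.\ $|C_1|$) from the sum in spite of the cancellation that may occur on the overlap $\vp^{-1}(E_s(a))\cap\psi^{-1}(E_s(a))$, where both summands are comparable; here the hypothesis $n\ne m$ is decisive. I would read the displayed identity, evaluated at finitely many parameters $\b_j=\b_0+j$ ($1\le j\le L$, $L\ge3$, $\b_0$ large and fixed), as a combination of the two columns $\big(P_n(\b_j)\xi_1^{\b_j}\big)_j$ and $\big(P_m(\b_j)\xi_2^{\b_j}\big)_j$. Because $P_m/P_n$ is a nonconstant polynomial (of degree $m-n\ge1$), no scalar relation $P_m(\b_j)\xi_2^{\b_j}=\lmd P_n(\b_j)\xi_1^{\b_j}$ can hold for all $j$: it would force an identity such as $Q(\b_0+1)Q(\b_0+3)=Q(\b_0+2)^2$ with $Q(\b)=(\b+n)\cdots(\b+m-1)$, which fails for $\b_0$ large by the strict log-concavity of $Q$. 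Hence, over the uniformly bounded ranges $|\xi_1|\le2$ and $c(s)\le|\xi_2|\le C(s)$, a compactness argument gives an $a$-independent lower bound for the distance from the $C_2$-column to the span of the $C_1$-column (which stays positive even as $\xi_1\to0$), and therefore $\max_{1\le j\le L}|(T_1+T_2)f_{a,\b_j}(z)|\gtrsim|C_2(z)|$ on $\psi^{-1}(E_s(a))$ and $\gtrsim|C_1(z)|$ on $\vp^{-1}(E_s(a))$. This uniform nondegeneracy of the finite linear system across the full range of $(\xi_1,\xi_2)$ is the step I expect to be the main obstacle; it is exactly what collapses when $n=m$ and $\vp=\psi$, in agreement with the elementary counterexample $u_m=-u_n$. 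Equivalently, one may split $\psi^{-1}(E_s(a))$ according to whether $\vp(z)$ is pseudohyperbolically near $a$, the far part being automatically subordinate and the near/overlap part being controlled by this estimate---this is the judicious partition of the preimage.

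Finally I would assemble the estimates. Integrating the pointwise bound and summing over $j$,
\[\frac{\mu_m(E_s(a))}{\om_m(S(a))^{q/p}}\approx\int_{\psi^{-1}(E_s(a))}|C_2|^q\,d\up\lesssim\sum_{j=1}^L\|(T_1+T_2)f_{a,\b_j}\|_{L_\up^q}^q\lesssim\|T_1+T_2\|^q,\]
and taking the supremum over $a$ yields $\|T_2\|\lesssim\|T_1+T_2\|$ when $p\le q$; the same computation with $C_1$ gives $\|T_1\|\lesssim\|T_1+T_2\|$, proving the norm equivalence. For $p>q$ the supremum is replaced by the Luecking integral: testing against $\sum_k r_k(t)\lmd_k f_{a_k,\b_j}$ over a separated lattice $\{a_k\}$ and invoking Khinchine's inequality converts the identical pointwise estimate into the required $L^{p/(p-q)}$ bound. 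For the essential norm (with $p\le q$), the family $f_{a,\b}$ tends to $0$ weakly as $|a|\to1$, so every compact operator annihilates it in the limit and $\|T_1+T_2\|_e\gtrsim\limsup_{|a|\to1}\|(T_1+T_2)f_{a,\b_j}\|_{L_\up^q}$; running the linear-independence estimate with $\limsup_{|a|\to1}$ in place of $\sup_a$ recovers $\|T_2\|_e^q\approx\limsup_{|a|\to1}\mu_m(E_s(a))/\om_m(S(a))^{q/p}\lesssim\|T_1+T_2\|_e^q$ and likewise for $T_1$, which together with $\|T_1+T_2\|_e\le\|T_1\|_e+\|T_2\|_e$ gives the essential-norm equivalence.
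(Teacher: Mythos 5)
Your proposal is correct in substance, but it reaches the key lower bounds by a genuinely different mechanism than the paper. The paper first reduces to $n=0$ via the Littlewood--Paley formula, then for each $a$ uses only \emph{two} test exponents $k\gamma$ and $2k\gamma$: after multiplying $Tf_{a_N,k\gamma}$ by $v_{k\gamma}Q_{a_N}^{k\gamma}$ (where $v_x=(2x)_m/(x)_m>1$ and $Q_b=(1-\bar b\vp)/(1-\bar b\psi)$), the $u_m$-terms of the two tested expressions coincide and the triangle inequality leaves the factor $|1-v_{k\gamma}Q_{a_N}^{k\gamma}|$; bounding this below forces a four-way partition $G_1,\dots,G_4$ of $\vp^{-1}(E_s(a))$ according to the size of $Q_{a_N}$, together with the rotation $a\mapsto a_N=ae^{N(1-|a|)\mathrm i}$ and the enlarged region $\Omega_s(a)$, which are needed precisely to keep $|Q_{a_N}|$ bounded where $\psi$ lands near $a$. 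You instead keep general $n<m$, use $L\ge 3$ exponents in arithmetic progression, and replace the case analysis by a single uniform linear-independence statement for the two columns $(P_n(\b_j)\xi_1^{\b_j})_j$ and $(P_m(\b_j)\xi_2^{\b_j})_j$, proved by strict log-concavity of $Q(\b)=(\b+n)\cdots(\b+m-1)$ plus compactness of the admissible range of $(\xi_1,\xi_2)$. Your route avoids the rotation trick and the explicit partition altogether and is more conceptual; the paper's is more elementary and completely explicit about the constants. The remaining machinery (pull-back measures and the Carleson embedding theorem, Khinchin over a lattice for $q<p$, weak-null test functions for the essential norm) is the same in both arguments.

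Two details in your write-up need shoring up, though neither is fatal. First, the ``compactness argument'' for the uniform angle bound: the map $\xi\mapsto\mathrm{dist}(\,\cdot\,,\mathrm{span}\,v(\xi))$ is not continuous where $v$ vanishes, so at $\xi_1=0$ (resp.\ $\xi_2=0$) you should factor out the leading power and work with $\tilde v(\xi_1)=(P_n(\b_j)\xi_1^{j-1})_j$, which is continuous and nonvanishing on $|\xi_1|\le 2$ and spans the same line for $\xi_1\ne0$; then the normalized distance is continuous and positive on a compact set, giving the uniform bound you assert. Second, for $n\ge1$ your coefficient $C_1(z)$ carries a factor $\bar a^{\,n}$ that degenerates as $a\to0$, so the supremum over \emph{all} $a$ in the Carleson condition is not reached by the test functions $f_{a,\b}$ alone near the origin; as in the paper's step \eqref{0624-3}, the range $|a|\le\tau$ must be handled separately (e.g.\ by testing against the polynomial $z^n/n!$, which $D^{(m)}$ annihilates since $m>n$).
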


By (\ref{0624-1}) and Theorem \ref{thB} in section 2, we can obtain geometric characterizations of the norm and essential norm of  $u_nD_{\vp}^{(n)}$ and $u_mD^{(m)}_{\psi}$. Indeed, as  important  objects  in operator theory, weighted composition operators and differentiation operators on Bergman spaces have been extensively investigated, with further characterizations available in  \cite{Lb2021bams,PjRjSk2021jga} and the references therein.

Throughout this paper, the letter $C$ will represent constants, which may vary from one occurrence to another. For two positive functions $f$ and $g$, we use the notation $f \lesssim g$ to denote that there exists a positive constant $C$, independent of the arguments, such that $f \leq Cg$. Similarly, $f \approx g$ indicates that $f \lesssim g$ and $g \lesssim f$.\\

%
%The rest of this paper is organized as follows. In Section 2, we  gather the necessary preliminaries. Sections 3  is dedicated to the proof of the main theorem.

\section{preliminaries}

This section establishes fundamental lemmas that constitute the mathematical foundation for proving our principal theorem, which are to be systematically employed throughout the proof of the main conclusion.

The theory of pull-back measures has been instrumental in characterizing the boundedness and compactness of (generalized) weighted composition operators between Bergman spaces.
For any $u\in H(\D),\vp\in S(\D), 0<q<\infty, \up$ is a positive Borel measure on $\D$,  the pull-back measure $\mu_{u,\vp,q,\up}$ is defined by
$$\mu_{u,\vp,q,\up}(E)=\int_{\vp^{-1}(E)} |u(z)|^q d\up(z).$$
Here, $E\subset\D$ is a measurable set. Then,
\begin{align}\label{0624-1}
\|uD_\vp^{(n)} \|_{A_\om^p\to L_\up^q}= \|D^{(n)}\|_{A_\om^p\to L_{\mu_{u,\vp,q,\up}}^q}.
\end{align}
Therefore, the behaviors of $uD_\vp^{(n)}$ between Bergman spaces are determined by the embedding derivatives of Bergman spaces into Lebesgue spaces.
%
%To state $\|D^{(n)}\|_{A_\om^p\to L_{\mu_{u,\vp,q,\up}}^q}$, which was given in \cite{PjRjSk2021jga},  we need some more notations.
%
As usual, we write 	$\rho(a,z)=|\vp_a(z)|=\left|\frac{a-z}{1-\ol{a}z}\right|$  for the pseudohyperbolic distance between $z$
and $a$, and $E_s(a)=\{z\in\D:\rho(a,z)<s\}$ for the pseudohyperbolic disc with center $a\in\D$ and radius $s\in(0,1)$.
Moreover, a sequence $\{a_i\}_{i=1}^\infty$ is called a $\delta$-lattice   if $D=\cup_{i=1}^\infty E_\delta(a_i)$ and $E_{\delta/2}(a_i)$ are pairwise disjoint.
Then, Theorem 3 in \cite{PjRjSk2021jga}(also see \cite[Theorem 1.2]{Lb2021bams}) can be stated as follows.
\begin{otherth}\label{thB}
Let $0 < p,q<\infty, n\in\N\cup\{0\}, \om\in\dD, 0 < s < 1, \mu$ be a positive Borel
measure on $\D$. Then the following
statements hold.
\begin{enumerate}[(i)]
  \item When $0<p\leq q<\infty$, $D^{(n)}:A_\om^p\to L_\mu^q$ is bounded  if and only if $$\sup_{z\in\D}\frac{\mu(E_s(z))}{(1-|z|)^{nq}\om(S(z))^\frac{q}{p}}<\infty.$$
      Moreover,
           $$\|D^{(n)}\|_{A_\om^p\to L_\mu^q}^q\approx \sup_{z\in\D}\frac{\mu(E_s(z))}{(1-|z|)^{nq}\om(S(z))^\frac{q}{p}}.$$
  \item When $0<q<p<\infty$, the following statements are equivalent:
         \begin{enumerate}
         \item[(iia)]   $D^{(n)}:A_\om^p\to L_\mu^q$ is bounded;
         \item[(iib)]  $D^{(n)}:A_\om^p\to L_\mu^q$ is compact;
         \item[(iic)] the maximum function
           $$M_\om(\mu,n)(z):=\frac{\mu(E_s(z))}{(1-|z|)^{nq}\om(S(z))}, \,\,\,z\in\D$$
       belongs to $L_\om^\frac{p}{p-q}$.
         \end{enumerate}
       Moreover,
        $$\|D^{(n)}\|_{A_\om^p\to L_\mu^q}^q\approx \|M_\om(\mu,n)\|_{L_\om^\frac{p}{p-q}}.$$
\end{enumerate}
\end{otherth}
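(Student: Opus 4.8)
The plan is to remove the derivative by a Littlewood--Paley change of weight and then reduce everything to the already available Carleson-measure description of the \emph{identity} embedding of a Bergman space into $L_\mu^q$. Set $\om_n(z)=(1-|z|^2)^{np}\om(z)$. The first task is to record two facts about this weight: that $\om_n\in\dD$ whenever $\om\in\dD$, and that $\om_n(S(z))\approx(1-|z|)^{np}\om(S(z))$ for every $z\in\D$. Both follow once one shows $\widehat{\om_n}(r)\approx(1-r)^{np}\hat\om(r)$: the upper bound is immediate from $(1-t)^{np}\le(1-r)^{np}$ on $[r,1)$, while the lower bound uses the reverse doubling inequality in the definition of $\dD$, which gives $\hat\om(\tfrac{1+r}{2})\le c\,\hat\om(r)$ with $c<1$ and hence $\int_r^{(1+r)/2}\om\gtrsim\hat\om(r)$.

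Next I would invoke the Littlewood--Paley identity (\ref{0402-1}) to see that $f\mapsto f^{(n)}$ is, modulo the finite-dimensional space of polynomials of degree $<n$, a norm isomorphism from $A_\om^p$ onto $A_{\om_n}^p$. Indeed (\ref{0402-1}) reads $\|f\|_{A_\om^p}^p\approx\sum_{j=0}^{n-1}|f^{(j)}(0)|^p+\|f^{(n)}\|_{A_{\om_n}^p}^p$, so the $n$-fold antiderivative $g\mapsto\frac{1}{(n-1)!}\int_0^z(z-\zeta)^{n-1}g(\zeta)\,d\zeta$ is a bounded right inverse whose $A_\om^p$-norm is comparable to $\|g\|_{A_{\om_n}^p}$. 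Writing $g=f^{(n)}$ and chasing both inequalities yields
$$\|D^{(n)}\|_{A_\om^p\to L_\mu^q}\approx\|I_d\|_{A_{\om_n}^p\to L_\mu^q},$$
where $I_d:A_{\om_n}^p\to L_\mu^q$ is the identity embedding; moreover $D^{(n)}$ is compact exactly when $I_d$ is, since $D^{(n)}=I_d\circ(\,\cdot\,)^{(n)}$ with the differentiation factor bounded.

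Applying the known Carleson-measure characterization of $I_d:A_\sigma^p\to L_\mu^q$ for $\sigma\in\dD$ with $\sigma=\om_n$ then delivers both parts. For $p\le q$ it reads $\|I_d\|^q\approx\sup_z\mu(E_s(z))/\om_n(S(z))^{q/p}$, and substituting $\om_n(S(z))^{q/p}\approx(1-|z|)^{nq}\om(S(z))^{q/p}$ recovers statement (i). For $q<p$ the identity embedding is bounded iff compact iff $z\mapsto\mu(E_s(z))/\om_n(S(z))$ lies in $L_{\om_n}^{p/(p-q)}$; a $\delta$-lattice discretization of this integral, using $\om_n(E_\delta(a_k))\approx\om_n(S(a_k))\approx(1-|a_k|)^{np}\om(S(a_k))$, shows its norm is comparable to $\|M_\om(\mu,n)\|_{L_\om^{p/(p-q)}}$, since in both sums the powers of $(1-|a_k|)$ collapse to $-npq/(p-q)$ and those of $\om(S(a_k))$ to $-q/(p-q)$. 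Transferring compactness through the isomorphism of the previous paragraph then yields statement (ii), including the equivalence of boundedness and compactness.

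Should one prefer a self-contained argument in place of quoting the case $n=0$, the same two regimes can be handled directly on a $\delta$-lattice. For sufficiency one covers $\D$ by the $E_\delta(a_k)$, bounds $\sup_{E_\delta(a_k)}|f^{(n)}|^q$ by $(1-|a_k|)^{-nq}\big(\om(S(a_k))^{-1}\int_{E_{\delta'}(a_k)}|f|^p\om\,dA\big)^{q/p}$ via subharmonicity, and then uses either the elementary inequality $\sum_k b_k^{q/p}\le(\sum_k b_k)^{q/p}$ when $p\le q$, or H\"older with exponents $p/q$ and $p/(p-q)$ when $q<p$; this is exactly where the hypothesis produces the supremum in (i) and the $L_\om^{p/(p-q)}$-norm in (ii). For necessity in (i) one tests against the normalized functions $f_a=\om(S(a))^{-1/p}\big((1-|a|^2)/(1-\ol a z)\big)^{c}$ with $c$ large, which satisfy $\|f_a\|_{A_\om^p}\approx1$ and $|f_a^{(n)}|\gtrsim(1-|a|)^{-n}\om(S(a))^{-1/p}$ on $E_s(a)$. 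The main obstacle will be the necessity when $q<p$: there the single test function must be replaced by a Rademacher randomization $f=\sum_k\lambda_k r_k(t)f_{a_k}$, and one has to combine Khinchine's inequality with an almost-orthogonality estimate of the form $\int_\D\big(\sum_k|\lambda_k f_{a_k}|^2\big)^{p/2}\om\,dA\lesssim\sum_k|\lambda_k|^p$, after which the duality $(\ell^{p/q})^*=\ell^{p/(p-q)}$ converts the resulting inequality into membership of $M_\om(\mu,n)$ in $L_\om^{p/(p-q)}$. This randomized lower bound, together with the verification that multiplication by $(1-|z|^2)^{np}$ preserves $\dD$, is the only genuinely delicate input; the reduction and the sufficiency estimates are routine once the local estimates for doubling weights are in hand.
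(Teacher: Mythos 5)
Your proposal is correct in substance, but note that the paper itself contains no proof of this statement: Theorem B is quoted verbatim as a restatement of Theorem 3 in \cite{PjRjSk2021jga} (see also \cite[Theorem 1.2]{Lb2021bams}), so the only comparison available is with the literature the paper cites, where the argument is essentially the reduction you describe. Your route --- replace $D^{(n)}$ by the identity embedding of $A_{\om_n}^p$ into $L_\mu^q$, where $\om_n(z)=(1-|z|^2)^{np}\om(z)$, via the Littlewood--Paley formula (\ref{0402-1}) (which is exactly where the hypothesis $\om\in\dD$ enters), and then quote the $n=0$ Carleson-measure theorem --- is sound, and your exponent bookkeeping in case (ii) is right: $-np\cdot\frac{p}{p-q}+np=-\frac{npq}{p-q}=-nq\cdot\frac{p}{p-q}$, so the two maximal-function conditions are in fact \emph{pointwise} comparable and the lattice discretization you invoke at that step is unnecessary. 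Two small repairs are needed. First, the inequality $\hat\om\left(\frac{1+r}{2}\right)\le c\,\hat\om(r)$ with $c<1$ is not an immediate consequence of the definition of $\dD$ when the constant $K$ in that definition exceeds $2$; you must iterate the reverse-doubling condition to obtain the standard power-type estimate $\hat\om(t)\lesssim\left(\frac{1-t}{1-r}\right)^\alpha\hat\om(r)$ for $r\le t<1$ and some $\alpha>0$, and then integrate over $\left[r,1-2^{-m}(1-r)\right]$ with $m$ large enough that the remaining tail is a definite fraction of $\hat\om(r)$; this yields $\widehat{\om_n}(r)\approx(1-r)^{np}\hat\om(r)$ and hence $\om_n\in\dD$. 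Second, when transferring compactness you should say explicitly that $D^{(n)}:A_\om^p\to L_\mu^q$ and $I_d:A_{\om_n}^p\to L_\mu^q$ are each obtained from the other by composition with a bounded operator (differentiation in one direction, and in the other the $n$-fold antiderivative normalized by $f^{(j)}(0)=0$ for $j<n$), so each is compact if and only if the other is. With these points made precise, your argument is a complete proof modulo the $n=0$ embedding theorem, which is the same external input the cited sources build on; it has the advantage over a bare citation of making visible exactly where $\om\in\dD$ (as opposed to $\om\in\hD$) is used.
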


When $\om\in\hD$ and $0<p<\infty$, let
\begin{align*}
f_{\lambda,\gamma,\om,p}(z)=\left(\frac{1-|\lambda|^2}{1-\ol{\lambda}z}\right)^{\gamma}\frac{1}{\om(S(\lambda))^\frac{1}{p}},\, \,\lambda,z\in\D.
\end{align*}
According to \cite[Lemma 3.1]{Pj2015}, there exists $\gamma_\#=\gamma_\#(\om,p)$, whenever $\gamma>\gamma_\#$,
\begin{align*}%\label{0302-1}
\|f_{\lambda,\gamma,\omega,p}\|_{A_\omega^p}\approx 1,\,\,\,\,\lambda\in\D.
\end{align*}
For brevity, we  denote $f_{\lambda,\gamma,\omega,p}$ by $f_{\lambda,\gamma}$.
Obviously, if $\gamma$ is fixed, $f_{\lmd,\gamma}$ converges to 0 uniformly on any compact subset of $\D$ as $|\lmd|\to 1$.
The  functions $\{f_{\lmd,\gamma}\}$ are widely used as test functions in investigating  the operator theory on Bergman spaces.

Lemma 4.3 in \cite{CCKY2020jfa}, which is immediate from the triangle inequality,
will be repeatedly used in the proof of the main result.
It is included here for easier references and expressed in a reorganized format.

\begin{Lemma}\label{Lemma 4.3}
Let $\varepsilon>0$. If $\eta$ and $\zeta$ are nonzero complex numbers such that $(1+\varepsilon)|\zeta|\leq |\eta|$, then
$$|\eta|+|\zeta|\geq |\eta+\zeta|\geq  \frac{\varepsilon}{2+\varepsilon}(|\eta|+|\zeta|).$$
\end{Lemma}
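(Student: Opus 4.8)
The plan is to treat the two inequalities separately, since the upper bound is unconditional whereas the lower bound is exactly where the hypothesis $(1+\varepsilon)|\zeta|\le|\eta|$ must be used. The inequality $|\eta+\zeta|\le|\eta|+|\zeta|$ is nothing more than the triangle inequality and needs no further comment.

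For the lower bound I would first apply the reverse triangle inequality to write $|\eta+\zeta|\ge|\eta|-|\zeta|$. Because the hypothesis forces $|\eta|>|\zeta|$, the right-hand side is nonnegative, so it suffices to establish the purely real estimate $|\eta|-|\zeta|\ge\frac{\varepsilon}{2+\varepsilon}(|\eta|+|\zeta|)$. Writing $a=|\eta|$ and $b=|\zeta|$ and clearing the positive factor $2+\varepsilon$, this reduces to verifying $(2+\varepsilon)(a-b)\ge\varepsilon(a+b)$.

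The remaining step is the single clean computation of the argument: the difference of the two sides is $(2+\varepsilon)(a-b)-\varepsilon(a+b)=2a-2(1+\varepsilon)b=2\big(a-(1+\varepsilon)b\big)$, which is nonnegative precisely by the hypothesis $(1+\varepsilon)|\zeta|\le|\eta|$. Combining this with the reverse triangle inequality gives the claimed lower bound. I do not anticipate any genuine obstacle here, as the statement is an elementary comparison of moduli; the only point deserving care is to reduce to the real inequality via the reverse triangle inequality \emph{before} invoking the hypothesis, so that the constant $\frac{\varepsilon}{2+\varepsilon}$ appears transparently as the exact numerical consequence of $a\ge(1+\varepsilon)b$.
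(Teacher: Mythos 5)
Your proof is correct and is exactly the argument the paper has in mind: the paper gives no proof of its own, merely noting that the lemma ``is immediate from the triangle inequality'' (quoting it from Lemma 4.3 of \cite{CCKY2020jfa}), and your reverse-triangle-inequality reduction followed by the algebraic identity $(2+\varepsilon)(a-b)-\varepsilon(a+b)=2\big(a-(1+\varepsilon)b\big)\ge 0$ fills in precisely that one-line justification.
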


%So, a sequence $\{a_j\}_{j=1}^\infty\subset\DD$ is a $r$-lattice for some $0<r<\infty$ means
%$\beta(a_i,a_j)\geq \frac{r}{5}$ for all $i\neq j$ and $\D=\cup_{j=1}^\infty D(a_j,5r)$.
%Here and henceforth,
%$D(z,r)=\big\{\xi\in\D; \beta(z,\xi)<r\big\}.$

 When $\om\in\hD$ and $1<p<\infty$, by Theorem 7 in \cite{PjRj2021adv}, $A_\om^p$ is reflexive.
 Using Lemma 2.1 in \cite{CT2016jmaa}, the reflexivity of a Banach space induces a complete characterization of compact operators on them. Thus, we have the following lemma.
\begin{Lemma}\label{0406-2} Suppose $1<p,q<\infty$,  $\om\in\hD$, $\mu$ is a positive Borel measure on $\D$.  If $K:A_\om^p\to L_\mu^q$ is bounded,  $K$ is compact if and only if $\|K{f_n}\|_{L_\mu^q}\to 0$ as $n\to \infty$ whenever $\{f_n\}$ is bounded in $A_\om^p$ and uniformly converges to 0 on any compact subset of $\D$ as $n\to \infty$.
\end{Lemma}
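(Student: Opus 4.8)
The plan is to combine the reflexivity of $A_\om^p$ with an abstract compactness criterion for operators on reflexive spaces, and then to translate the functional-analytic notion of weak null convergence into the concrete condition of uniform convergence on compact subsets of $\D$. Since $\om\in\hD$ and $1<p<\infty$, the space $A_\om^p$ is reflexive by Theorem 7 in \cite{PjRj2021adv}. By Lemma 2.1 in \cite{CT2016jmaa}, a bounded operator $K:A_\om^p\to L_\mu^q$ is then compact if and only if $\|Kg_n\|_{L_\mu^q}\to 0$ whenever $\{g_n\}$ converges weakly to $0$ in $A_\om^p$. Thus the whole lemma reduces to the claim that, \emph{for bounded sequences in $A_\om^p$}, weak convergence to $0$ is equivalent to uniform convergence to $0$ on compact subsets of $\D$; once this equivalence is in hand, the two directions of the stated lemma follow by feeding the respective sequences into the criterion from \cite{CT2016jmaa}.

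The bridge between the two modes of convergence is the boundedness of the point-evaluation functionals $f\mapsto f(z)$ on $A_\om^p$, together with the fact that their norms are bounded uniformly for $z$ ranging over any compact subset of $\D$. Indeed, the standard pointwise estimate for $\om\in\hD$ gives $|f(z)|\lesssim \om(S(z))^{-1/p}\|f\|_{A_\om^p}$, and on a compact set $\{|z|\le r_0\}$ the quantity $\om(S(z))$ is bounded below by a positive constant, so $|f(z)|\lesssim_{r_0}\|f\|_{A_\om^p}$ there. Consequently, if $\{f_n\}$ is bounded and converges weakly to $0$, then $f_n(z)\to 0$ for every fixed $z$, while the uniform pointwise bound makes $\{f_n\}$ a normal family; Vitali's theorem then upgrades pointwise convergence to convergence uniform on compact subsets. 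This handles one implication of the equivalence, and with it the forward direction of the lemma (compactness $\Rightarrow$ the sequential condition).

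For the reverse implication I would invoke reflexivity directly. Suppose $\{f_n\}$ is bounded in $A_\om^p$ and $f_n\to 0$ uniformly on compacta. Any subsequence is bounded, hence, by reflexivity, has a further subsequence $\{f_{n_k}\}$ converging weakly to some $g\in A_\om^p$; applying the bounded point-evaluation functionals shows $g(z)=\lim_k f_{n_k}(z)=0$ for all $z$, so $g\equiv 0$. Since every subsequence admits a sub-subsequence converging weakly to $0$, the full sequence $\{f_n\}$ converges weakly to $0$, and the criterion of \cite{CT2016jmaa} yields $\|Kf_n\|_{L_\mu^q}\to 0$. The only point requiring care is the passage from weak convergence to uniform convergence on compacta and back, which rests entirely on the boundedness of point evaluations being \emph{locally uniform} in $z$; this is precisely where the hypothesis $\om\in\hD$ is used, and I regard verifying that uniformity as the main (though mild) technical obstacle.
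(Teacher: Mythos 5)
Your proposal is correct and takes essentially the same route as the paper, which likewise obtains the lemma from the reflexivity of $A_\om^p$ (Theorem 7 of \cite{PjRj2021adv}) combined with Lemma 2.1 of \cite{CT2016jmaa}; your added value is spelling out the bridge (via locally uniformly bounded point evaluations, Montel/Vitali, and a subsequence argument) between weak null convergence and uniform convergence on compacta, which the paper leaves implicit. One harmless slip in your bookkeeping: the implication ``weak null $\Rightarrow$ uniform convergence on compacta'' is the one needed for ``sequential condition $\Rightarrow$ compact,'' while the reflexivity-based converse is what yields ``compact $\Rightarrow$ sequential condition''---you state these pairings the other way around, but since you prove both implications of the equivalence, the argument as a whole is complete.
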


We conclude this section by extending the classical essential norm estimation framework to derivative embeddings between Bergman and Lebesgue spaces.

\begin{Lemma}\label{0624-2}
Let $1< p\leq q<\infty, n\in\N\cup\{0\}, \om\in\dD, 0 < s < 1, \mu$ be a positive Borel
measure on $\D$.    If $D^{(n)}:A_\om^p\to L_\mu^q$ is bounded,
$$\|D^{(n)}\|_{e,A_\om^p\to L_\mu^q}^q\approx \limsup_{|z|\to 1}\frac{\mu(E_s(z))}{(1-|z|)^{nq}\om(S(z))^\frac{q}{p}}.$$
\end{Lemma}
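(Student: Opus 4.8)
The plan is to prove the two required inequalities separately; throughout write $A:=\limsup_{|z|\to1}\dfrac{\mu(E_s(z))}{(1-|z|)^{nq}\om(S(z))^{q/p}}$ and $G_r(z):=\dfrac{\mu(E_s(z)\cap\{r<|w|<1\})}{(1-|z|)^{nq}\om(S(z))^{q/p}}$ for $0\le r<1$, so that $A=\limsup_{|z|\to1}G_0(z)$ and $G_r\le G_0$ decreases in $r$.

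For the lower bound $\|D^{(n)}\|_{e,A_\om^p\to L_\mu^q}^q\gtrsim A$, I would test against the functions $f_{\lmd,\gamma}$ for a fixed $\gamma>\gamma_\#$. These satisfy $\|f_{\lmd,\gamma}\|_{A_\om^p}\approx1$ and converge to $0$ uniformly on compact subsets of $\D$ as $|\lmd|\to1$, so Lemma \ref{0406-2} gives $\|Kf_{\lmd,\gamma}\|_{L_\mu^q}\to0$ for every compact $K$. Hence $\limsup_{|\lmd|\to1}\|D^{(n)}f_{\lmd,\gamma}\|_{L_\mu^q}=\limsup_{|\lmd|\to1}\|(D^{(n)}-K)f_{\lmd,\gamma}\|_{L_\mu^q}\lesssim\|D^{(n)}-K\|$, and taking the infimum over $K$ bounds the left-hand side by $\|D^{(n)}\|_{e,A_\om^p\to L_\mu^q}$. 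Differentiating $f_{\lmd,\gamma}$ exactly $n$ times and using $|1-\ol\lmd z|\approx1-|\lmd|$ together with $|\lmd|\to1$ on $E_s(\lmd)$ yields $|f_{\lmd,\gamma}^{(n)}(z)|\gtrsim(1-|\lmd|)^{-n}\om(S(\lmd))^{-1/p}$ for $z\in E_s(\lmd)$ and $|\lmd|$ near $1$; integrating over $E_s(\lmd)$ gives $\|D^{(n)}f_{\lmd,\gamma}\|_{L_\mu^q}^q\gtrsim G_0(\lmd)$, and letting $|\lmd|\to1$ produces the lower bound.

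For the upper bound I would peel off the mass of $\mu$ near the origin. For $0<r<1$ define $K_rf=\chi_{\{|z|\le r\}}\,f^{(n)}$, regarded as an operator $A_\om^p\to L_\mu^q$. By Cauchy's estimates a bounded sequence in $A_\om^p$ converging to $0$ on compacta has $n$-th derivatives converging to $0$ uniformly on $\{|z|\le r\}$, and $\mu(\{|z|\le r\})<\infty$ because $D^{(n)}$ is bounded (test with $z^n/n!$); hence $K_r$ is compact by Lemma \ref{0406-2}. Since $(D^{(n)}-K_r)f=\chi_{\{r<|z|<1\}}f^{(n)}$, one has $\|D^{(n)}-K_r\|_{A_\om^p\to L_\mu^q}=\|D^{(n)}\|_{A_\om^p\to L^q_{\mu_r}}$ with $d\mu_r=\chi_{\{r<|z|<1\}}\,d\mu$, and Theorem \ref{thB}(i) (applicable since $p\le q$) identifies $\|D^{(n)}\|_{A_\om^p\to L^q_{\mu_r}}^q\approx\sup_{z\in\D}G_r(z)$. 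Therefore $\|D^{(n)}\|_{e,A_\om^p\to L_\mu^q}^q\lesssim\inf_{0<r<1}\sup_{z\in\D}G_r(z)$.

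The crux of the argument, and the step I expect to be the main obstacle, is the geometric limit $\lim_{r\to1}\sup_{z\in\D}G_r(z)=A$. For the inequality ``$\ge$'' I would use $1-|w|\approx1-|z|$ on $E_s(z)$, which forces $E_s(z)\subset\{|w|>r\}$ once $|z|$ is close enough to $1$; evaluating $G_r$ along a sequence realizing $A=\limsup G_0$ then gives $\sup_z G_r(z)\ge A$ for every $r$. For ``$\le$'', fix $\varepsilon>0$ and pick $\rho<1$ with $G_0(z)<A+\varepsilon$ for all $|z|>\rho$; when $|z|\le\rho$ the disk $E_s(z)$ stays in a fixed compact subset of $\D$, so $G_r(z)=0$ as soon as $r$ exceeds a threshold depending only on $\rho$ and $s$, whereas $G_r(z)\le G_0(z)<A+\varepsilon$ for $|z|>\rho$. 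Letting $r\to1$ and then $\varepsilon\to0$ closes the estimate and, combined with the lower bound, proves $\|D^{(n)}\|_{e,A_\om^p\to L_\mu^q}^q\approx A$.
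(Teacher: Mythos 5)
Your proposal is correct and follows essentially the same route as the paper: the upper bound via the compact truncations $\chi_{\{|z|\le r\}}D^{(n)}$ combined with Theorem~\ref{thB}(i) applied to the restricted measure, and the lower bound by testing $D^{(n)}-K$ against the normalized kernels $f_{\lmd,\gamma}$ and invoking Lemma~\ref{0406-2} to discard $Kf_{\lmd,\gamma}$. You merely make explicit some steps the paper leaves implicit (the compactness of the truncation, the pointwise lower bound for $f_{\lmd,\gamma}^{(n)}$ on $E_s(\lmd)$, and the limit $\sup_z G_r(z)\to A$ as $r\to1$), all of which check out.
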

\begin{proof}
For any $r\in (0,1)$, let $\chi_r$ be the characteristic function of $r\D$. That is $\chi_r(z)=1$ when $|z|<r$ and $\chi_r(z)=0$ when $|z|\geq r$. By Lemma \ref{0406-2} and Theorem \ref{thB}, $\chi_r D^{(n)}:A_\om^p\to L_\mu^q$ is compact and
$$\|D^{(n)}\|_{e,A_\om^p\to L_\mu^q}^q\leq \|D^{(n)}-\chi_r D^{(n)}\|_{A_\om^p\to L_\mu^q}\approx \sup_{z\in\D}\frac{(\mu-\mu\chi_r)(E_s(z))}{(1-|z|)^{nq}\om(S(z))^\frac{q}{p}}.$$
Letting $r\to 1$, we show
$$\|D^{(n)}\|_{e,A_\om^p\to L_\mu^q}^q\lesssim \limsup_{|z|\to 1}\frac{\mu(E_s(z))}{(1-|z|)^{nq}\om(S(z))^\frac{q}{p}}.$$

On the other hand, letting $\gamma>\gamma_\#$ be fixed, for any compact operator $K:A_\om^p\to L_\mu^q$,  when $\lmd\in\D$, we have
\begin{align*}
\|(D^{(n)}-K)f_{\lmd,\gamma}\|_{L_\mu^q}^q
&\gtrsim \int_{\D} |D^{(n)}f_{\lambda,\gamma}(z)|^qd\mu(z)-\int_\D |Kf_{\lambda,\gamma}(z)|^qd\mu(z)\\
&\geq \int_{E_s(\lambda)} |D^{(n)}f_{\lambda,\gamma}(z)|^qd\mu(z)-\int_\D |Kf_{\lambda,\gamma}(z)|^qd\mu(z).
\end{align*}
Letting $|\lambda|\to 1$, by Lemma \ref{0406-2},  we claim
$$\|D^{(n)}-K\|_{A_\om^p\to L_\mu^q}^q\gtrsim \limsup_{|\lmd|\to 1}\frac{\mu(E_s(\lmd))}{(1-|\lmd|)^{nq}\om(S(\lmd))^\frac{q}{p}}.$$
Since $K$ is arbitrary, we get the desired lower estimate. The proof is complete.
\end{proof}

\section{Proof of Theorem \ref{0421-1}}

%This section is devoted to the  proof of Theorem \ref{0421-1}.

\begin{proof}%[Proof of Theorem \ref{0421-1}]
The upper estimates of the norm and essential norm of $u_nD_{\vp}^{(n)}+u_mD^{(m)}_{\psi}$ are obvious.
Consequently, it suffices to establish the lower estimates in our analysis.
Invoking (\ref{0402-1}) with Lemmas 4 and 5 in \cite{DjLsLz2024mmas}, without loss of generality, we can assume  $0=n<m<\infty$.

For any $a\in \D\backslash\{0\}$, let  $a_N:=ae^{N(1-|a|)\mathrm{i}}$ for some $N>0$, which will be fixed later,  and
$$ \Omega_{s}(a):=\left\{w\in\D:|1-\ol{a}w|<4\sup_{z\in E_s(a) }|1-\ol{a}z|\right\}.$$
Then, $\sup\limits_{a\in\D}\rho(a,a_N)<1$.
By Lemma 4.30 in\cite{Zhu1}, there exists a constant $C_0=C_0(s)$,  for any $a,z\in\D$ and $w\in\Omega_{s}(a)$, we have
\begin{align}\label{0401-1}
|1-\ol{a}w|<C_0(1-|a|)
\end{align}
and
\begin{align}\label{0619-1}
|1-\ol{a_N}z|\approx |1-\ol{a}z|.
\end{align}
%Since $\lim\limits_{\varepsilon\to 1} \frac{|e^{N\varepsilon \mathrm{i}}-1|}{\varepsilon}=N$,
Letting $N=4C_0$, as $|a|$ approaches 1, we have $2C_0\leq \frac{|e^{N(1-|a|)\mathrm{i}}-1|}{1-|a|}\leq 6C_0 $.
By $E_s(a)\subset \Omega_s(a)$ and (\ref{0401-1}),
\begin{align}\label{4.9}
\left|\frac{1-\ol{a_{N}}z}{1-\ol{a_{N}}w}\right|
\leq \frac{|e^{N(1-|a|)\mathrm{i}}-1|+|1-\ol{a}z|}{\big||e^{N(1-|a|)\mathrm{i}}-1|-|1-\ol{a}w|\big|}
%\leq \frac{6C(1-|a|)+C(1-|a|)}{2C(1-|a|)-C(1-|a|)}
\leq 7,\quad \mbox{ when }z\in E_s(a), w\in \Omega_s(a).
\end{align}
Meanwhile, by the definition of the set $\Omega_{s}(a)$, %when $z\in E_s(a)$ and $w\not\in \Omega_s(a)$,
we have
\begin{align}\label{0622-7}
\left|\frac{1-\ol{a}z}{1-\ol{a}w}\right|\leq \frac{1}{4},\quad \mbox{ when }z\in E_s(a), w\not\in \Omega_s(a).
\end{align}

For brief, put
$$T=u_0C_{\vp}+u_mD^{(m)}_{\psi},\,\quad \|\cdot\|_{A_\om^p\to L_\up^q}=\|\cdot\|,\quad \|\cdot\|_{e,A_\om^p\to L_\up^q}=\|\cdot\|_e, $$
and
$$Q_b=\frac{1-\ol{b}\vp}{1-\ol{b}\psi},\quad  (x)_m=x(x+1)\cdots(x+m-1),\quad v_x=\frac{(2x)_m}{(x)_m}.$$

Let $\gamma>\gamma_\#$ be fixed.
Since $v_\gamma>1$, we can choose $\varepsilon>0$  small enough such that $\eta:=\frac{1+\varepsilon}{v_\gamma}<1$.
Then, we construct a decomposition of the set $\vp^{-1}( E_s(a) )$ in the following manner:
\begin{align*}
G_1(a)&=\left\{z\in F_1(a):\frac{|v_\gamma(1-Q_{a_N}^\gamma)|}{|1-v_\gamma|}>1+\varepsilon\right\},\\
G_2(a)&=\left\{z\in F_1(a):\frac{|v_\gamma(1-Q_{a_N}^\gamma)|}{|1-v_\gamma|}\leq 1+\varepsilon, \, |Q_{a_N}^\gamma|>\frac{1+\eta}{2}\right\},\\
G_3(a)&=\left\{z\in F_1(a):\frac{|v_\gamma(1-Q_{a_N}^\gamma)|}{|1-v_\gamma|}\leq 1+\varepsilon, \, |Q_{a_N}^\gamma|\leq \frac{1+\eta}{2}\right\},\\
G_4(a)&=\vp^{-1}( E_s(a) )\backslash F_1(a).
\end{align*}
Here,
 \begin{align*}
 F_1(a):=\vp^{-1}( E_s(a) )\cap \psi^{-1}( \Omega_{s}(a)).
 \end{align*}
Therefore, by (\ref{4.9}), we find
\begin{align}\label{0605-1}
|Q_{a_N}(z)|\leq 7 \quad\mbox{ when }\quad z\in F_1(a).
\end{align}
The proof will be completed through three sequential parts:

{\bf Part (a): $1<p\leq q<\infty$ and the lower estimate of $\|T\|$.}

%For any  $a,\lmd\in\D$, $\gamma>\gamma_\#$ and $k\geq 1$,   we have
%\begin{align}
%\|Tf_{\lmd,k\gamma}\|_{L_\up^q}^q
%&\geq \int_{\vp^{-1}(E_s(a))} \left|Tf_{\lmd,k\gamma}\right|^q d\up \nonumber\\
%&= \frac{1}{\om(S(\lmd))^\frac{q}{p}}\int_\D \left|\frac{ u_0(1-|\lmd|^2)^{k\gamma}}{(1-\ol{\lmd}\vp)^{k\gamma}}+
%\frac{(k\gamma)_m\ol{\lmd}^m u_m(1-|\lmd|^2)^{k\gamma}}{(1-\ol{\lmd}\psi)^{k\gamma+m}}\right|^q d\up.
%\end{align}
For any given $k\geq 1$, as $|a|$ approaches 1,    by (\ref{0401-1}), (\ref{0619-1}) and (\ref{0605-1}), we get
\begin{align*}
\int_{F_1(a)} \left|Tf_{a_N,k\gamma}\right|^q d\up
&\gtrsim \frac{1}{\om(S(a))^\frac{q}{p}}\int_{F_1(a)} \left|u_0+
\frac{(k\gamma)_m\ol{a_N}^m u_m }{(1-\ol{a_N}\psi)^{m}}Q_{a_N}^{k\gamma}\right|^q d\up  \\
& \gtrsim \frac{1}{\om(S(a))^\frac{q}{p}}\int_{F_1(a)} \left|u_0\frac{(2k\gamma)_m}{(k\gamma)_m} Q_{a_N}^{k\gamma}+
\frac{(2k\gamma)_m\ol{a_N}^m u_m }{(1-\ol{a_N}\psi)^{m}}Q_{a_N}^{2k\gamma}\right|^q d\up
\end{align*}
and
\begin{align*}
\int_{F_1(a)} \left|Tf_{a_N,2k\gamma}\right|^q d\up
&\gtrsim\frac{1}{\om(S(a))^\frac{q}{p}}\int_{F_1(a)} \left| u_0+
\frac{(2k\gamma)_m\ol{a_N}^m u_m }{(1-\ol{a_N}\psi)^{m}}Q_{a_N}^{2k\gamma}\right|^q d\up.
\end{align*}
%By (\ref{4.9}), we find
%\begin{align*}
%\int_{F_1(a)} \left|Tf_{a_N,k\gamma}\right|^q d\up
%\gtrsim \frac{1}{\om(S(a))^\frac{q}{p}}\int_{F_1(a)} \left|u_0\frac{(2k\gamma)_m}{(k\gamma)_m} Q_{a_N}^{k\gamma}+
%\frac{(2k\gamma)_m\ol{a_N}^m u_m }{(1-\ol{a_N}\psi)^{m}}Q_{a_N}^{2k\gamma}\right|^q d\up.
%\end{align*}
Then, the Triangle inequality implies
\begin{align}
\int_{F_1(a)} \left(\left|Tf_{a_N,k\gamma}\right|^q +\left|Tf_{a_N,2k\gamma}\right|^q\right)d\up
&\gtrsim \frac{1}{\om(S(a))^\frac{q}{p}}\int_{F_1(a)}\left|
1-v_{k\gamma} Q_{a_N}^{k\gamma}
\right|^q
 |u_0|^q d\up.
\label{0619-2}
\end{align}

On $G_1(a)$, by Lemma \ref{Lemma 4.3}, we get
\begin{align*}
\left|1-v_\gamma Q_{a_N}^\gamma\right|
&=\left|1-v_\gamma+v_\gamma-v_\gamma Q_{a_N}^\gamma\right|
\approx \left|1-v_\gamma\right|+\left|v_\gamma-v_\gamma Q_{a_N}^\gamma\right|\geq v_\gamma-1.
\end{align*}

Similarly, on $G_2(a)$, we have
$$
\frac{|1-Q_{a_N}^\gamma|}{|(1-v_\gamma) Q_{a_N}^\gamma|}
= \frac{|v_\gamma(1-Q_{a_N}^\gamma)|}{|1-v_\gamma|}\frac{1}{v_\gamma|Q_{a_N}^\gamma|}
< \frac{2(1+\varepsilon)}{v_\gamma(1+\eta)}
<1,
$$
and
\begin{align*}
\left|1-v_\gamma Q_{a_N}^\gamma\right|
&=\left|1-Q_{a_N}^\gamma+Q_{a_N}^\gamma-v_\gamma Q_{a_N}^\gamma\right|\\
&\approx \left|1-Q_{a_N}^\gamma\right|+\left|Q_{a_N}^\gamma-v_\gamma Q_{a_N}^\gamma\right|
\geq \frac{(v_\gamma-1)(\eta+1)}{2}.
\end{align*}

On $G_3(a)$, since $|Q_{a_N}^\gamma|\leq \frac{\eta+1}{2}<1$ and $\lim\limits_{k\to\infty }v_{k\gamma}=2^m$,
 there exists  $k_0>1$ such that $v_{k_0\gamma}|Q_{a_N}^{k_0\gamma}|<\frac{1}{2}$, which implies
\begin{align*}
\left|1-v_{k_0\gamma} Q_{a_N}^{k_0\gamma}\right|
&\geq \frac{1}{2}.
\end{align*}
Therefore, we obtain
\begin{align*}
\|T\|&\gtrsim \|Tf_{a_N,\gamma}\|_{L_\up^q}^q+\|Tf_{a_N,2\gamma}\|_{L_\up^q}^q+\|Tf_{a_N,k_0\gamma}\|_{L_\up^q}^q+\|Tf_{a_N,2k_0\gamma}\|_{L_\up^q}^q\\
&\geq  \int_{G_1(a)\cup G_2(a)} \left(\left|Tf_{a_N,\gamma}\right|^q +\left|Tf_{a_N,2\gamma}\right|^q \right)d\up
+\int_{G_3(a)} \left(\left|Tf_{a_N,k_0\gamma}\right|^q +\left|Tf_{a_N,2k_0\gamma}\right|^q\right) d\up  \nonumber\\
&\gtrsim
\frac{1}{\om(S(a))^\frac{q}{p}}\left(
\int_{G_1(a)\cup G_2(a)}\left| 1-v_{\gamma} Q_{a_N}^\gamma \right|^q  |u_0|^q d\up
+\int_{G_3(a)}\left| 1-v_{k_0\gamma} Q_{a_N}^{k_0\gamma} \right|^q  |u_0|^q d\up\right) \\
&\gtrsim
\frac{1}{\om(S(a))^\frac{q}{p}}\left(
\int_{F_{1}(a)} |u_0|^q d\up\right).
\end{align*}

On $G_4(a)$, by (\ref{0622-7}), $|Q_a|<\frac{1}{4}$.
So, there exists $k_1>1$ such that $|v_{k_1\gamma}Q_a^{k_1\gamma}|<\frac{1}{2}$.
Then, as established in (\ref{0619-2}), we show
\begin{align*}
\|T\|&\gtrsim \|Tf_{a,k_1\gamma}\|_{L_\up^q}^q+\|Tf_{a,2k_1\gamma}\|_{L_\up^q}^q \\
&\geq  \int_{G_4(a)} \left(\left|Tf_{a,k_1\gamma}\right|^q +\left|Tf_{a,2k_1\gamma}\right|^q \right)d\up\\
&\gtrsim \frac{1}{\om(S(a))^\frac{q}{p}}\int_{G_4(a)}\left|
u-v_{k_1\gamma} Q_a^{k_1\gamma} \right|^q  |u_0|^q d\up
\gtrsim \frac{1}{\om(S(a))^\frac{q}{p}}\int_{G_4(a)}  |u_0|^q d\up .
\end{align*}
Therefore, as $|a|\to 1$, we  infer that
\begin{align}\label{0625-1}
\|T\|^q\gtrsim \frac{1}{\om(S(a))^\frac{q}{p}}\int_{\vp^{-1}( E_s(a) )} |u_0|^q d\up.
\end{align}
So, we can choose a constant $\tau\in(0,1)$ such that (\ref{0625-1}) holds for all $\tau<|a|<1$.
When $|a|\leq \tau$, letting $h\equiv 1$,
\begin{align}\label{0624-3}
\|T\|^q
\gtrsim \|Th\|_{L_\up^q}^q
\gtrsim \frac{1}{\om(S(a))^\frac{q}{p}}\int_{\vp^{-1}( E_s(a) )} |u_0|^q d\up.
\end{align}
Therefore, by (\ref{0624-1}) and Theorem \ref{thB}, $$\|u_0C_\vp\|\lesssim \|T\|.$$
Thus, $\|u_mD_\psi^{(m)}\|\lesssim \|T\|.$

{\bf Part (b): $1<p\leq q<\infty$ and the lower estimate of $\|T\|_e$.}

In the previous process, in order to get (\ref{0625-1}), we decompose $\vp^{-1}(E_s(a))$ into four disjoint subsets $G_j(1\leq j\leq 4)$.
On every region $G_j(a)$, we  choose $s_j\in\{1,k_0,k_1\}$ and $\lambda_j\in\{a_N,a\}$ properly to obtain
\begin{align}\label{0330-1}
\|Tf_{\lambda_j,s_j\gamma}\|_{A_\upsilon^q}^q+\|Tf_{\lambda_j,2s_j\gamma}\|_{A_\upsilon^q}^q
&\geq\int_{G_j(a)} \left(\left|Tf_{\lambda_j,s_j\gamma}\right|^q +\left|Tf_{\lambda_j,2s_j\gamma}\right|^q \right)d\up\nonumber\\
&\gtrsim \frac{1}{\om(S(a))^\frac{q}{p}}\int_{G_j(a)}    |1-v_{s_j\gamma} Q_{\lambda_j}^{s_j\gamma}| |u_0|^qd\upsilon  \nonumber\\
&\gtrsim \frac{1}{\om(S(a))^\frac{q}{p}} \int_{G_j(a)} |u_0|^q d\up.
\end{align}
Since $\cup_{j=1}^4 G_j(a)=\vp^{-1}(E_s(a))$, whenever $K:A_\om^p\to L_\up^q$ is compact, we deduce that
\begin{align*}
\|T-K\|^q
\gtrsim&  \sum_{1\leq j\leq 4}\sum_{k=1,2} \|(T-K)f_{\lambda_j,ks_j\gamma}\|_{L_\up^q}^q \\
\gtrsim&  \sum_{1\leq j\leq 4}\sum_{k=1,2} \left(\|Tf_{\lambda_j,ks_j\gamma}\|_{L_\up^q}^q
-\|Kf_{\lambda_j,ks_j\gamma}\|_{L_\up^q}^q\right)\\
\gtrsim& \frac{1}{\om(S(a))^\frac{q}{p}}\int_{\vp^{-1}( E_s(a) )}  |u_0|^q d\up
-\sum_{1\leq j\leq 4}\sum_{k=1,2} \|Kf_{\lambda_j,ks_j\gamma}\|_{L_\up^q}^q.
\end{align*}
Since $K$ is arbitrary and compact, letting $|a|\to 1$, by Lemma \ref{0406-2},  we have
$$\|T\|_e^q \gtrsim
\limsup_{|a|\to 1}\frac{1}{\om(S(a))^\frac{q}{p}}\int_{\vp^{-1}( E_s(a) )} |u_0|^q d\up .
$$
Then,  (\ref{0624-1}) and Lemma \ref{0624-2} deduce
$$\|T\|_e\gtrsim \|u_0C_\vp\|_e .$$
Therefore, $$\|T\|_e\gtrsim \|u_m D^{(m)}_\psi\|_e .$$

{ \bf Part (c): $1<q<p<\infty$ and the lower estimates of $\|T\|$.}

 Let $\{r_k(t)\}$  be  Rademacher functions, $\vec{a}=\{a_k\}_{k=1}^\infty$ be a $\frac{s}{2}$-lattice in $\D$
 and  $\vec{\lambda}=\{\lambda_k\}$ be a sequence given by one of $\vec{a}:=\{a_k\}$ and $\overrightarrow{a_N}:=\{a_{k,N}\}$.
 Here, $a_{k,N}=e^{N(1-|a_k|)\mathrm{i}}a_k$.
 We claim that $\{a_{k,N}\}_{k=1}^\infty$ is separated. Otherwise, for any $0<x<\frac{1}{3}$, there exist $a_{i,N}$ and $a_{j,N}$ such that
 $E_x(a_{i,N})\cap E_x(a_{j,N})\neq \O$. By \cite[Lemma 4.3]{CCKY2021ieot}, $\rho(a_i,a_j)<2(1+8N)x$.
 This is contradictory to $\vec{a}$ is a lattice.
Moreover, we can assume $\inf|a_k|>0$ and $\{|a_k|\}$ is  increasing.
Therefore, there exists a $M>0$ such that $|a_k|>\tau$ if and only if $k>M$.
Here, the constant $\tau$ is that decided by (\ref{0625-1}).

By \cite[Thoerem 1]{PjRjSk2021jga}, if $y\geq \gamma_\#$ is fixed,   for any $\vec{c}=\{c_k\}_{k=1}^\infty\in l^p$,
$\|g_{\vec{c},y,t,\vec{\lambda}}\|_{A_\om^p}\lesssim \|\vec{c}\|_{l^p}$,  in which
$$g_{\vec{c},y,t,\vec{\lambda}}(z)=\sum_{k=1}^\infty c_k r_k(t)f_{\lambda_k,y}(z).  $$
Let $\chi_k$ be the  characteristic function of $E_s(a_k)$.
By Fubini's theorem,  Khinchin's inequality  and (\ref{0619-1}),  we conclude that
 \begin{align*}
\int_0^1 \|T  g_{\vec{c},y,t,\vec{\lambda}}\|_{L_\up^q}^qdt
=&\int_\D\int_0^1 \left|\sum_{k=1}^\infty c_k r_k(t) T f_{\lambda_k,y}  \right|^q   dt   d\up \\
\approx& \int_\D\left( \sum_{k=1}^\infty |c_k|^2|T  f_{\lambda_k,y}|^2\right)^\frac{q}{2}  d\up
\geq \int_\D\left( \sum_{k=1}^\infty |c_k|^2|T  f_{\lambda_k,y}|^2(\chi_k\circ\vp)\right)^\frac{q}{2}  d\up  \\
\approx & \sum_{k=1}^\infty |c_k|^q \int_{\vp^{-1}(E_s(a_k))} |T  f_{\lambda_k,y}|^qd\up
= \sum_{j=1}^4 \sum_{k=1}^\infty |c_k|^q \int_{G_j(a_k)} |T  f_{\lambda_k,y}|^qd\up .
\end{align*}
Analogous to the parametric derivation leading to (\ref{0330-1}), when $j=1,2,3,4$, we can choose $\vec{\lambda}:=\{\lambda_{j,k}\}\in\{\vec{a},\overrightarrow{a_N}\}$ and $s_j\in\{1,k_0,k_1\}$  such that
\begin{align*}
\int_{G_j(a_k)} \left(\left|Tf_{\lambda_{j,k},s_j\gamma}\right|^q +\left|Tf_{\lambda_{j,k},2s_j\gamma}\right|^q \right)d\up
&\gtrsim \frac{1}{\om(S(a_k))^\frac{q}{p}}\int_{G_j(a_k)}    |1-v_{s_j\gamma} Q_{\lambda_{j,k}}^{s_j\gamma}| |u_0|^qd\upsilon  \nonumber\\
&\gtrsim \frac{1}{\om(S(a_k))^\frac{q}{p}} \int_{G_j(a_k)} |u_0|^q d\up
\end{align*}
as $|a_{k}|>\tau$.
Therefore, when $j=1,2,3,4$,
 \begin{align*}
\int_0^1 \|T  g_{\vec{c},s_j\gamma,t,\vec{\lambda}}\|_{L_\up^q}^qdt +  \int_0^1 \|T  g_{\vec{c},2s_j\gamma,t,\vec{\lambda}}\|_{L_\up^q}^qdt
\gtrsim&  \sum_{k=M}^\infty
\frac{|c_k|^q}{\om(S(a_k))^\frac{q}{p}}\int_{G_j(a_k)}\left|u_0\right|^q  d\up.
\end{align*}
Since $\cup_{j=1}^4 G_j(a_k)=\vp^{-1}(a_k)$ and
$$\|T  g_{\vec{c},s_j\gamma,t,\vec{\lambda}}\|_{L_\up^q}
\leq \|T\|\,\|g_{\vec{c},s_j\gamma,t,\vec{\lambda}}\|_{A_\om^p}
\lesssim \|T\|\,\|\vec{c}\|_{l^p},$$
we obtain
\begin{align*}
\|T\|^q\|\vec{c}\|_{l^p}^q\gtrsim
\sum_{k=M}^\infty \frac{ |c_k|^q}{\om(S(a_k))^\frac{q}{p}}
\int_{\vp^{-1}(E_s(a_k))}  |u_0|^qd\up.
\end{align*}
By (\ref{0624-3}), it follows that
\begin{align*}
\|T\|^q\|\vec{c}\|_{l^p}^q
\gtrsim&  \sum_{k=1}^\infty
\frac{|c_k|^q}{\om(S(a_k))^\frac{q}{p}}\int_{\vp^{-1}(E_s(a_k))}|u_0|^q  d\up.
\end{align*}
Let $\xi_k=\frac{1}{\om(S(a_k))^\frac{q}{p}}\int_{\vp^{-1}(E_s(a_k))}|u_0|^q  d\up$.
Since $\{|c_k|^q\}\in l^\frac{p}{q}$ and $\|\{|c_k|^q\}\|_{l^\frac{p}{q}}=\|\vec{c}\|_{l^p}^q$,
we have $\{\xi_k\}\in (l^\frac{p}{q})^*\simeq l^\frac{p}{p-q}$ and $\|\{\xi_k\}\|_{l^\frac{p}{p-q}}\lesssim \|T\|^q$.
Let the pull-back measure $\mu$ on $\D$ be defined by
$$\mu(E)=\int_{\vp^{-1}(E)}|u_0|^qd\up.$$
It is deduced that
\begin{align*}
\int_\D \left(\frac{\mu(E_\frac{s}{2}(z))}{\om(S(z))}\right)^\frac{p}{p-q}\om(z)dA(z)
&\leq \sum_{k=1}^\infty \int_{E_\frac{s}{2}(a_k)} \left(\frac{\mu(E_\frac{s}{2}(z))}{\om(S(z))}\right)^\frac{p}{p-q}\om(z)dA(z)\\
&\lesssim \sum_{k=1}^\infty \left(\frac{\mu(E_s(a_k))}{\om(S(a_k))}\right)^\frac{p}{p-q} \om(E_{\frac{s}{2}}(a_k))\\
&\lesssim \sum_{k=1}^\infty \left(\frac{\mu(E_s(a_k))}{\om(S(a_k))^\frac{q}{p}}\right)^\frac{p}{p-q} \\
&\lesssim \|T\|^\frac{pq}{p-q}.
\end{align*}
By (\ref{0624-1}) and Theorem \ref{thB}, it can be verified that
$$\|u_0C_\vp\|\approx \|I_d\|_{A_\om^p\to L_\mu^q}
\approx \left(\int_\D \left(\frac{\mu(E_\frac{s}{2}(z))}{\om(S(z))}\right)^\frac{p}{p-q}\om(z)dA(z)\right)^\frac{p-q}{pq}\lesssim \|T\|.$$
Consequently, we also have
$$\|u_m D_\psi^{(m)}\|\lesssim \|T\|.$$
Moreover, by Theorem \ref{thB},   the boundedness of $T:A_\om^p\to L_\up^q(q<p)$ implies the compactness of it.
The proof is complete.
\end{proof}

\end{document}